\newtheorem{thm}{Theorem}[section]
\newtheorem{prop}[thm]{Proposition}
\theoremstyle{remark}
\newtheorem{exa}[thm]{Example}
\newtheorem*{acknowledgement}{Acknowledgment}
\title{Strange examples of local signatures for fibered surfaces of small genus}
\author{Makoto Enokizono}
\subjclass[2010]{14D06}
\thanks{
	{\bf Keywords:}
fibration, local signature}
\address{Makoto Enokizono,
	Department of Mathematics,
	Graduate School of Science,
	Osaka University,
	Toyonaka, Osaka 560-0043, Japan}
\email{m-enokizono@cr.math.sci.osaka-u.ac.jp}
\begin{document}
\maketitle

\begin{abstract}
We give examples of local signatures, completely different from the usual ones, for general fibrations of genus $2$ and genus $3$.
\end{abstract}

\section*{Introduction}
For a closed oriented real $4$-manifold $X$, the {\em signature of $X$} is defined to be the signature of the intersection form $H^2(X,\mathbb{R})\times H^2(X,\mathbb{R})\to \mathbb{R}$, which is a symmetric bilinear form.
We consider the situation that $X$ admits a fibration $f\colon X\to B$ over a closed oriented real surface $B$.
Under some conditions, the signature of $X$ happens to localize around a finite number of fiber germs $F_1$, $F_2$, \ldots, $F_m$:
$$
\mathrm{Sign}(X)=\sum_{i=1}^{m}\sigma(F_i).
$$
We call this phenomenon a {\em localization of the signature} and the value $\sigma(F_i)$ a {\em local signature of $F_i$}.
A first example of local signatures is the one for genus $1$ fibrations due to Matsumoto~\cite{Ma}.
He also gave a local signature for Lefschetz fibrations of genus $2$ in \cite{Ma2},
which was generalized by Endo~\cite{En} for hyperelliptic fibrations.
Later, Kuno~\cite{Ku} defined a local signature for non-hyperelliptic fibrations of genus $3$.
On the other hand, Horikawa~\cite{Ho} defined a function $\mathrm{Ind}(F)$ on the set of holomorphic fiber germs $F$ of genus $2$, which is nowadays called a {\em Horikawa index}, in order to study algebraic surfaces of general type near the Noether line.
Once a Horikawa index is defined (for a certain type of holomorphic fibrations), we can define a local signature by using it, as shown in \cite{ak}.
After Horikawa's work, Xiao~\cite{pi1} and Arakawa-Ashikaga~\cite{ArAs} defined a Horikawa index and a local signature for hyperelliptic fibrations.
Terasoma~\cite{Te} showed the coincidence of Endo's local signature and Arakawa-Ashikaga's one.
For non-hyperelliptic genus $3$ fibrations, Reid~\cite{Re} defined a Horikawa index.
Similarly to Terasoma's proof, Kuno's local signature and Reid's one for non-hyperelliptic fibrations of genus $3$ also coincide (cf.\ \cite{As2}).

In this short note, in the algebro-geometric category, we construct a local signature associated with an effective divisor $D$ on the moduli space $\mathcal{M}_g$ of smooth curves of genus $g$
and compute some examples of local signatures for general fibrations of genus $2$ or $3$, which are different from Endo-Arakawa-Asikaga's one and Kuno-Reid's one.
The idea of constructions is essentially due to Ashikaga-Yoshikawa \cite{AsYo},
who called the divisor $4\lambda-\delta$ on the moduli space $\overline{\mathcal{M}}_g$ of stable curves of genus $g$ the {\em signature divisor} and gave a local signature by pulling back the signature divisor using a geometric meaningful effective divisor $D$, e.g., the Brill-Noether locus, via the moduli map of a fiber germ.
Replacing $D$ by another effective divisor, the associated local signature varies.
We compute local signatures in the case that $g=2$ and $D$ is the bielliptic locus and that $g=3$ and $D$ is the locus of curves having a hyperflex.

\begin{acknowledgement}
I would like to express special thanks to Prof.\ Kazuhiro Konno for many comments and supports.
Thanks are also due to Prof.\ Tadashi Ashikaga for useful advises and discussions.
The research is supported by JSPS KAKENHI No.\ 16J00889.
\end{acknowledgement}

\section{Local signature associated with an effective divisor on $\mathcal{M}_g$}

Let $\mathcal{M}_g$ and $\overline{\mathcal{M}}_g$ respectively denote the moduli space of smooth curves of genus $g$ and the moduli space of stable curves of genus $g$.
The rational Picard group of $\overline{\mathcal{M}}_g$ is generated freely by the Hodge bundle $\lambda$ and the boundary divisors $\delta_0,\delta_1,\ldots,\delta_{[g/2]}$ for $g\ge 3$, where we use the notation in \cite{HaMo}.
 Let $D$ be an effective divisor on $\mathcal{M}_g$ and $\overline{D}$ the compactification of $D$ in $\overline{\mathcal{M}}_g$.
Then we can write $\overline{D}\sim_{\mathbb{Q}}a\lambda-\sum_{i=0}^{[g/2]}b_i\delta_i$ for some rational numbers $a, b_i>0$, where the symbol $\sim_{\mathbb{Q}}$ means the $\mathbb{Q}$-linear equivalence.

Let $f\colon S\to B$ be a surjective morphism from a complex smooth projective surface $S$ to a smooth projective curve $B$ whose general fiber $F_{\eta}$ is a smooth projective curve of genus $g$, which is called a fibered surface or a global fibration of genus $g$.
Let $K_f=K_S-f^{*}K_B$ denote the relative canonical bundle of $f$ and put
$$
\chi_f=\mathrm{deg}f_{*}\mathcal{O}_S(K_f)=\chi(\mathcal{O}_S)-(g-1)(b-1),
$$
$$
e_f=e_{\mathrm{top}}(S)-e_{\mathrm{top}}(F_{\eta})e_{\mathrm{top}}(B)=e_{\mathrm{top}}(S)-4(g-1)(b-1),
$$
where $b$ is the genus of $B$ and $e_{\mathrm{top}}(X)$ the topological Euler number of $X$.

Let $f\colon S\to \Delta$ be a relatively minimal degeneration of curves of genus $g$, that is,  
$f$ is a surjective proper morphism from a complex smooth surface $S$ to a small open disk $\Delta$ such that $f^{-1}(t)$ is a smooth curve of genus $g$ for any $t\neq 0$ and the central fiber $F:=f^{-1}(0)$ has no $(-1)$-curves.
We take the stable reduction $\widetilde{f}\colon \widetilde{S}\to \widetilde{\Delta}$ of $f$ via $\widetilde{\Delta}\to \Delta;\ z\mapsto z^{N}$.
Resolving singularities of $\widetilde{S}$, we obtain a semi-stable reduction $\widehat{f}\colon \widehat{S}\to \widetilde{\Delta}$.
Note that $N$ can be taken as the pseudo-period of the topological monodromy $\mu_f$ of $f$ as a pseudo-periodic class (cf.\ \cite{As}).
Put $F:=f^{-1}(0)$ and $\widehat{F}:=\widehat{f}^{-1}(0)$.
Let 
$$
\mathrm{Lsd}(F):=\sigma(f,F;h_{\partial S})-\frac{1}{N}\sigma(\widehat{f},\widehat{F};h_{\partial \widehat{S}})
$$
be the local signature defect of $(f,F)$ (more precisely, see \cite{As})
and
$$
e_{F}:=\left(e_{\mathrm{top}}(F)-(2-2g)\right)-\frac{1}{N}\left(e_{\mathrm{top}}(\widehat{F})-(2-2g)\right).
$$
On the other hand, the local invariants $c_1^{2}(F)$, $c_2(F)$ and $\chi_{F}$ were defined in \cite{Tan2} for a fiber germ $F$ of a global fibration $f\colon S\to B$. Indeed,
\begin{prop}
We have $e_{F}=c_2(F)$ and
$$
\mathrm{Lsd}(F)=\frac{1}{3}(c_1^{2}(F)-2e_{F})=4\chi_{F}-e_{F}.
$$
\end{prop}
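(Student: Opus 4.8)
The plan is to separate the two displayed identities and to note first that the last equality is then purely formal. Granting from \cite{Tan2} the local Noether relation $12\chi_F=c_1^2(F)+c_2(F)$ and the Euler identity $c_2(F)=e_F$, one has $4\chi_F-e_F=\tfrac13(c_1^2(F)+c_2(F))-e_F=\tfrac13 c_1^2(F)+\tfrac13 e_F-e_F=\tfrac13(c_1^2(F)-2e_F)$. Hence the content lies in proving (i) $e_F=c_2(F)$ and (ii) $\mathrm{Lsd}(F)=\tfrac13(c_1^2(F)-2e_F)$, and I would prove each by passing to the semi-stable model $\widehat{f}\colon\widehat{S}\to\widetilde{\Delta}$ coming from the degree-$N$ base change $z\mapsto z^N$ and its desingularization.

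For (i) I would unwind both quantities through $\widehat{f}$. By definition $e_F$ is the difference between the Euler defect $e_{\mathrm{top}}(F)-(2-2g)$ of the original germ and $\tfrac1N$ times the Euler defect $e_{\mathrm{top}}(\widehat{F})-(2-2g)$ of the semi-stable fibre, so the task is to identify this difference with Tan's local invariant $c_2(F)$. The key book-keeping is the behaviour of the topological Euler number of the central fibre under the base change and the subsequent resolution: I would express $e_{\mathrm{top}}(\widehat{F})-(2-2g)$ through the components, geometric genera and nodes of the semi-stable fibre, and compare the resulting contribution, weighted by the multiplicities introduced by $z\mapsto z^N$, with the definition of $c_2(F)$. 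Matching the two is a finite combinatorial computation and I expect no conceptual obstruction here.

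The heart of the matter is (ii). Globally, the Hirzebruch signature theorem gives $\mathrm{Sign}(S)=\tfrac13(K_S^2-2e(S))=\tfrac13(K_f^2-2e_f)$ for every fibred surface, which in Tan's normalization is the sum over fibre germs of $\tfrac13(c_1^2(F)-2c_2(F))$ and, through $c_1^2(F)=12\chi_F-c_2(F)$, is exactly the pull-back of the signature divisor $4\lambda-\delta$ of \cite{AsYo}. To localize this I would apply the Atiyah--Patodi--Singer signature theorem to the $4$-manifolds-with-boundary $f^{-1}(\Delta)$ and $\widehat{f}^{-1}(\widetilde{\Delta})$: each signature carries a boundary term governed by the monodromy on $\partial S$, respectively $\partial\widehat{S}$, and these are precisely the corrections $h_{\partial S}$ and $h_{\partial\widehat{S}}$ appearing in $\sigma(f,F;h_{\partial S})$ and $\sigma(\widehat{f},\widehat{F};h_{\partial\widehat{S}})$. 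For the semi-stable model the local signature formula holds transparently, the singularities being nodes; forming the defect $\sigma(f,F;h_{\partial S})-\tfrac1N\sigma(\widehat{f},\widehat{F};h_{\partial\widehat{S}})$ cancels the boundary contributions and, after invoking (i), leaves $\tfrac13(c_1^2(F)-2e_F)$.

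The step I expect to be the main obstacle is the control of these boundary corrections in (ii): checking that Wall's non-additivity of the signature under gluing along $\partial S$ is exactly absorbed by the $h_\partial$-terms, and that the factor $\tfrac1N$ is the correct normalization for the base-changed germ, so that the semi-stable contribution cancels and only the local Chern-number combination remains. Once this cancellation is secured, it together with (i) and the formal identity of the first paragraph gives all three equalities at once.
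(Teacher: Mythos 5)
Your first paragraph's formal reduction is fine, and your step (i) could probably be pushed through, but step (ii) --- which you yourself identify as the heart --- is where the proposal has a genuine gap. You propose to establish $\mathrm{Lsd}(F)=\frac{1}{3}(c_1^{2}(F)-2e_{F})$ by applying Atiyah--Patodi--Singer to the pieces $f^{-1}(\Delta)$ and $\widehat{f}^{-1}(\widetilde{\Delta})$ and then checking that Wall non-additivity is ``exactly absorbed'' by the $h_{\partial}$-terms. That is not a proof but a restatement of the difficulty: the identification of the correction terms $h_{\partial S}$, $h_{\partial \widehat{S}}$ with eta/Wall-type boundary contributions, and the fact that the combination $\sigma(f,F;h_{\partial S})-\frac{1}{N}\sigma(\widehat{f},\widehat{F};h_{\partial \widehat{S}})$ localizes the global signature defect $\mathrm{Sign}(S)-\frac{1}{N}\mathrm{Sign}(\widehat{S})$, is precisely the content of Ashikaga's theorem in \cite{As}, which the paper does not reprove but cites. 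Note moreover that $\partial S$ and $\partial \widehat{S}$ are different $3$-manifolds (the latter arises from the degree-$N$ base change $z\mapsto z^{N}$ followed by resolution), so the asserted cancellation of boundary terms with weight $\frac{1}{N}$ is exactly the nontrivial point; you flag it as ``the main obstacle'' and leave it unresolved, so the decisive step of (ii) is assumed rather than proved.

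The paper's actual argument avoids all boundary analysis. It records that the four local invariants $\mathrm{Lsd}$, $c_1^{2}$, $c_2$ (equivalently $e_{F}$) and $\chi_{F}$ each localize the corresponding global defect under semi-stable reduction --- the first being Ashikaga's theorem, the others due to Tan \cite{Tan2} --- that all of them vanish on semi-stable germs, and that a given germ $F$ may be taken, after a base change, to sit in a global fibration all of whose other fibers are semi-stable. Each localization sum then collapses to the single term at $F$, and the proposition drops out of the global Hirzebruch signature formula $\mathrm{Sign}(S)=K_f^{2}-8\chi_f$ and Noether's formula $12\chi_f=K_f^{2}+e_f$ applied to both $f$ and $\widehat{f}$; in particular $e_{F}=c_2(F)$ comes for free from the two expressions for $e_f-\frac{1}{N}e_{\widehat{f}}$, so the fiber-by-fiber combinatorial matching you sketch in (i) is also unnecessary. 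To salvage your plan, replace the APS analysis by a citation of the localization statement in \cite{As} and run this isolation argument; as written, your proposal re-opens, without closing, exactly the problem the cited references solve.
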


\begin{proof}
These invariants satisfy the following properties:
Let $f\colon S\to B$ be a fibered surface of genus $g$ and $\widehat{f}\colon \widehat{S} \to \widetilde{B}$ be the semi-stable reduction of $f$ via a cyclic covering $\widetilde{B}\to B$ of degree $N$.
Then we have
\begin{align}
\mathrm{Sign}(S)-\frac{1}{N}\mathrm{Sign}(\widehat{S})&=\sum_{p\in B}\mathrm{Lsd}(F_p), \nonumber \\
K_f^2-\frac{1}{N}K_{\widehat{f}}^2&=\sum_{p\in B}c_1^2(F_p), \nonumber \\
e_f-\frac{1}{N}e_{\widehat{f}}&=\sum_{p\in B}c_2(F_p)=\sum_{p\in B}e_{F_p}, \label{redeq} \\
\chi_f-\frac{1}{N}\chi_{\widehat{f}}&=\sum_{p\in B}\chi_{F_p}. \nonumber
\end{align}
Let $F$ be an arbitrary fiber germ in a global fibration $f\colon S\to B$.
Taking base change, we may assume that any fiber of $f$ other than $F$ is semi-stable.
Thus we get the assertion from Hirzebruch's signature formula $\mathrm{Sign}(S)=K_f^2-8\chi_f$, Noether's formula $12\chi_f=K_f^2+e_f$ and \eqref{redeq} since $\mathrm{Lsd}(\widehat{F})=c_1^2(\widehat{F})=c_2(\widehat{F})=e_{\widehat{F}}=\chi_{\widehat{F}}=0$
for any semi-stable fiber germ $\widehat{F}$.
\end{proof}

Let $\rho_{\widehat{f}}\colon \widetilde{\Delta}\to \overline{\mathcal{M}}_g$ be the moduli map of the 
semi-stable reduction $\widehat{f}\colon \widehat{S}\to \widetilde{\Delta}$.
For an effective divisor $E$ on $\overline{\mathcal{M}}_g$ not containing the image $\rho_{\widehat{f}}(\widetilde{\Delta})$, we can define the pull-back $\rho_{\widehat{f}}^{*}E$. Let $E(\widehat{F}):=\mathrm{deg}(\rho_{\widehat{f}}^{*}E$).
Note that even when $E\sim E'$ holds for two effective divisors $E$ and $E'$, it is not always true that $E(\widehat{F})=E'(\widehat{F})$ because we treat local fibrations here.
Given an effective divisor $D$ on $\mathcal{M}_g$ such that $\overline{D}$ does not contain $\rho_{\widehat{f}}(\widetilde{\Delta})$ with $\overline{D}\sim_{\mathbb{Q}}a\lambda-\sum_{i=0}^{[g/2]}b_i\delta_i$, we put
$$
\lambda_{D}(\widehat{F}):=\frac{1}{a}\left(\overline{D}(\widehat{F})+\sum_{i}b_i\delta_i(\widehat{F})\right).
$$
In general, for a relatively minimal fiber germ $F$, we define
$$
\lambda_{D}(F):=\chi_{F}+\frac{\lambda_{D}(\widehat{F})}{N}
$$

and

$$
\delta(F):=e_{F}+\frac{\delta(\widehat{F})}{N}=e_{\mathrm{top}}(F)-(2-2g),
$$
which are independent of the choice of $N$.

Now we consider a global fibration $f\colon S\to B$, that is, a surjective morphism from a smooth projective surface $S$ to a smooth projective curve $B$ with connected fibers.
Assume that the moduli point of the general fiber of $f$ is not contained in $D$.
From \eqref{redeq}, we have
$$
\chi_f=\sum_{p\in B}\lambda_D(F_p),\quad e_f=\sum_{p\in B}\delta(F_p).
$$

From Hirzebruch's signature formula $\mathrm{Sign}(S)=4\chi_f-e_f$, we can write
$$
\mathrm{Sign}(S)=\sum_{p\in B}(4\lambda_{D}(F_p)-\delta(F_p)).
$$
We call $\sigma_{D}(F):=4\lambda_{D}(F)-\delta(F)$ the {\em local signature of a fiber germ $F$ associated with $D$}.
Note that the divisor $4\lambda-\delta$ is called the signature divisor in \cite{AsYo}.

\section{Examples}

Now we consider two effective divisors $E_{g,-1}$ and $E_{g,1}$ on $\mathcal{M}_g$, which parameterize curves $C$ of genus $g$ having a special Weierstrass point.
Let $C$ be a smooth curve of genus $g$.
Let $p$ be a Weierstrass point of $C$, i.e., a point on $C$ satisfying $h^{0}(gp)\ge 2$.
Then $p$ is said to be {\em exceptional of type $g-1$} (resp.\ {\em of type $g+1$}) if $h^{0}((g-1)p)\ge 2$ (resp.\ $h^{0}((g+1)p)\ge 3$).
The locus $E_{g,-1}$ (resp.\ $E_{g,1}$) on $\mathcal{M}_g$ is (roughly) defined by the set of curves of genus $g$ with an exceptional Weierstrass point of type $g-1$ (resp.\ of type $g+1$) with the natural scheme structure, which is of codimension $1$ for $g\ge 3$. For more details, see \cite{Di}.
For $g=2$, the loci $\overline{E}_{2,-1}$ and $\overline{E}_{2,1}$ are empty.
For $g=3$, $\overline{E}_{3,-1}$ is coincide with the hyperelliptic locus $\overline{\mathcal{H}}_3$ as a set, but as a divisor, we have $\overline{E}_{3,-1}=8\overline{\mathcal{H}}_3$.
Indeed, once a genus $3$ curve has one exceptional Weierstrass point of type $2$, it becomes hyperelliptic and hence has $8$ Weierstrass points of type $2$ automatically.
Since the hyperelliptic Weierstrass point is exceptional of type $g-1$ and $g+1$, 
the hyperelliptic locus $\overline{\mathcal{H}}_g$ is contained in both $\overline{E}_{g,-1}$ and $\overline{E}_{g,1}$.
In particular, $\overline{E}_{3,-1}=8\overline{\mathcal{H}}_3$ is a subdivisor of $\overline{E}_{3,1}$.
Thus we can define an effective divisor $\overline{\mathcal{HF}}:=\overline{E}_{3,1}-\overline{E}_{3,-1}$.
As a different definition, let $\mathcal{HF}$ be the locus on the moduli space $\mathcal{M}_3\setminus \mathcal{H}_3$ of smooth plane quartics parameterizing plane quartic curves with a hyperflex, i.e., $4$-fold tangent point.
Then the above $\overline{\mathcal{HF}}$ is just the closure of $\mathcal{HF}$ in $\overline{\mathcal{M}}_3$.
The locus $\overline{\mathcal{HF}}$ has multiplicity $1$ around general points.
For $g\ge 4$, $\overline{E}_{g,-1}$ and $\overline{E}_{g,1}$ also have multiplicity $1$ around general points.
It is known that the rational divisor classes of $\overline{E}_{g,-1}$ and $\overline{E}_{g,1}$ are given by
\begin{align*}
\overline{E}_{g,-1}&=\frac{g^2(g-1)(3g-1)}{2}\lambda-\frac{(g-1)^2g(g+1)}{6}\delta_0-\sum_{i=1}^{[g/2]}\frac{i(g-i)g(g^2+g-4)}{2}\delta_i, \\
\overline{E}_{g,1}&=\frac{(g+1)(g+2)(3g^2+3g+2)}{2}\lambda-\frac{g(g+1)^2(g+2)}{6}\delta_0-\sum_{i=1}^{[g/2]}\frac{i(g-i)(g+1)(g+2)^2}{2}\delta_i
\end{align*}
(cf.\ \cite{Di}, \cite{Cu}, \cite{CumEsGa}).
In particular, we have
\begin{align*}
\overline{E}_{3,-1}&=72\lambda-8\delta_0-24\delta_1, \quad \overline{E}_{3,1}=380\lambda-40\delta_0-100\delta_1, \\
\overline{\mathcal{H}}_{3}&=9\lambda-\delta_0-3\delta_1, \quad
\overline{\mathcal{HF}}=308\lambda-32\delta_0-76\delta_1.
\end{align*}
Now, we will check using the simplest example of fibered surface of genus $3$ that two local signatures $\sigma_{\mathcal{H}_{3}}$ and $\sigma_{\mathcal{HF}}$ associated with $\overline{\mathcal{H}}_{3}$ and $\overline{\mathcal{HF}}$ give different localizations.

\begin{exa}
Let $\{C_{\lambda}\}_{\lambda}\subset |4H_{\mathbb{P}^{2}}|$ be a general Lefschetz pencil of quartics.
The base locus of $\{C_{\lambda}\}_{\lambda}$ consists of $16$ points  and they are on smooth members.
Blowing up at these $16$ points, we obtain a non-hyperelliptic fibration $f\colon S\to \mathbb{P}^{1}$ of genus $3$.
By a simple computation, we get $\chi_f=3$, $e_f=27$, $K_f^2=9$ and $\mathrm{Sign}(S)=-15$.
Note that all singular fibers of $f$ are irreducible curves with one node and the number of them is $27$.
Thus we have $\overline{\mathcal{H}}_3(f)=0$, $\lambda(f)=3$, $\delta_0(f)=27$ and $\delta_1(f)=0$.
Hence we have $\overline{\mathcal{HF}}(f)=60$.
This implies that the number of smooth curves in a general Lefschetz pencil of quartic curves with a hyperflex is $60$.
Let $F_{\mathrm{hf}}$ and $F_0$ respectively be a smooth quartic fiber germ of $f$ with one hyperflex and an irreducible fiber germ of $f$ with one node.
Then clearly we have 
$$
\delta_0(F_{\mathrm{hf}})=0,\quad \delta_1(F_{\mathrm{hf}})=0,\quad \overline{\mathcal{H}}_3(F_{\mathrm{hf}})=0,\quad \overline{\mathcal{HF}}(F_{\mathrm{hf}})=1
$$
and
$$
\delta_0(F_0)=1,\quad \delta_1(F_0)=0,\quad \overline{\mathcal{H}}_3(F_0)=0,\quad \overline{\mathcal{HF}}(F_0)=0.
$$
Thus we get
$$
\lambda_{\mathcal{H}_3}(F_{\mathrm{hf}})=0,\quad 
\lambda_{\mathcal{H}_3}(F_0)=\frac{1}{9},\quad
\sigma_{\mathcal{H}_3}(F_{\mathrm{hf}})=0,\quad 
\sigma_{\mathcal{H}_3}(F_0)=-\frac{5}{9}
$$
and
$$
\lambda_{\mathcal{HF}}(F_{\mathrm{hf}})=\frac{1}{308},\quad 
\lambda_{\mathcal{HF}}(F_0)=\frac{8}{77}, \quad
\sigma_{\mathcal{HF}}(F_{\mathrm{hf}})=\frac{1}{77},\quad \sigma_{\mathcal{HF}}(F_0)=-\frac{45}{77}.
$$
Thus two local signatures $\sigma_{\mathcal{H}_3}$ and $\sigma_{\mathcal{HF}}$ are different.
\end{exa}

Next, let us consider the genus $2$ case.
The rational Picard group of $\overline{\mathcal{M}}_2$ is generated by $\lambda$, $\delta_0$ and $\delta_1$ with one relation $10\lambda=\delta_0+2\delta_1$.
For a semi-stable fiber germ $\widehat{F}$ of genus $2$, we put $\lambda(\widehat{F}):=(\delta_0(\widehat{F})+2\delta_1(\widehat{F}))/10$.
For a not necessarily semi-stable fiber germ $F$, we define $\lambda(F)$ by using the semi-stable reduction similarly as in the previous section.
We also define a (pre-)Horikawa index $\mathrm{Ind}(F):=10\lambda(F)-\delta(F)$ for a relatively minimal genus $2$ fiber germ $F$.
It coincides with the original Horikawa index defined by using the double covering data (cf.\ \cite{Te}, \cite{Ho}, \cite{pi1}) and hence it is non-negative.
A local signature can be defined by $\sigma(F):=4\lambda(F)-\delta(F)$ for any fiber germ $F$ of genus $2$.

Now, we define another local signature for non-bielliptic genus $2$ fiber germs.
Let $\mathcal{B}_2$ be the bielliptic locus on $\mathcal{M}_2$ and $\overline{\mathcal{B}}_2$ its closure in $\overline{\mathcal{M}}_2$.
They are irreducible codimension $1$ loci.
From \cite{FabPa}, the rational linearly equivalence class of $\overline{\mathcal{B}}_2$ is 
$$
\overline{\mathcal{B}}_2=\frac{3}{2}\delta_0+6\delta_1=30\lambda-\frac{3}{2}\delta_0=15\lambda+3\delta_1.
$$
Thus, for non-bielliptic genus $2$ fiber germs, two localizations of the Hodge bundle $\lambda$ can be realized as follows.
We put
$$
\lambda_{\mathcal{B}_2,0}(\widehat{F}):=\frac{1}{30}\overline{\mathcal{B}}_2(\widehat{F})+\frac{1}{20}\delta_0(\widehat{F})
$$
and
$$
\lambda_{\mathcal{B}_2,1}(\widehat{F}):=\frac{1}{15}\overline{\mathcal{B}}_2(\widehat{F})-\frac{1}{5}\delta_1(\widehat{F})
$$
for a semi-stable non-bielliptic fiber germ $\widehat{F}$ of genus $2$.
By using semi-stable reduction, we define $\lambda_{\mathcal{B}_2,0}(F)$, $\lambda_{\mathcal{B}_2,1}(F)$ for any non-bielliptic fiber germ $F$ of genus $2$.
Then $\sigma_{\mathcal{B}_2,i}(F):=4\lambda_{\mathcal{B}_2,i}(F)-\delta(F)$, $i=1,2$ are local signatures for genus $2$ non-bielliptic fibrations.

\begin{exa}
Let $F_0$, $F_1$ and $F_b$ respectively be non-bielliptic genus $2$ fiber germs the image of whose moduli map meets $\Delta_0$, $\Delta_1$ and $\mathcal{B}_2$ transversally (and does not meet other loci among them) at the moduli point of the central fiber.
Then we have
$$
\sigma(F_0)=-\frac{3}{5},\quad \sigma(F_1)=-\frac{1}{5},\quad \sigma(F_b)=0,
$$
$$
\sigma_{\mathcal{B}_2,0}(F_0)=-\frac{4}{5},\quad \sigma_{\mathcal{B}_2,0}(F_1)=-1, \quad \sigma_{\mathcal{B}_2,0}(F_b)=\frac{2}{5},
$$
$$
\sigma_{\mathcal{B}_2,1}(F_0)=-1,\quad \sigma_{\mathcal{B}_2,1}(F_1)=-\frac{9}{5},\quad \sigma_{\mathcal{B}_2,0}(F_b)=\frac{4}{15}.
$$
For example, take a general member $R$ in the complete linear system $|pr_1^{*}\mathcal{O}_{\mathbb{P}^{1}}(N)\otimes pr_2^{*}\mathcal{O}_{\mathbb{P}^{1}}(6)|$, $N\in 2\mathbb{Z}_{>0}$ on $\mathbb{P}^{1}\times \mathbb{P}^{1}$ and construct the double covering $S\to \mathbb{P}^{1}\times \mathbb{P}^{1}$ branched over $R$.
Then the composite $f\colon S\to \mathbb{P}^{1}$ of the double covering and the first projection $pr_1$ is a non-bielliptic fibration of genus $2$.
By a simple computation, we have
$$
\chi_f=N,\quad K_f^{2}=2N,\quad e_f=10N,\quad \mathrm{Sign}(S)=-6N.
$$
Since $R$ is general, we may assume that any singular fiber germ of $f$ is of type $F_0$ as above.
Thus the number of fiber germs of type $F_0$, $F_1$ and $F_b$ is $10N$, $0$ and $15N$, respectively.

\end{exa}


\begin{thebibliography}{99}
\bibitem{ArAs}
T. Arakawa and T. Ashikaga,
Local splitting families of hyperelliptic pencils I,
Tohoku Math.\ J. \textbf{53} (2001), 369--394;
II, Nagoya Math.\ J. \textbf{175} (2004), 103--124.

\bibitem{As}
T. Ashikaga,
Local signature defect of fibered complex surfaces via monodromy and stable reduction,
Comment.\ Math.\ Helv.\ \textbf{85} (2010), 417--461.

\bibitem{As2}
T. Ashikaga,
Horikawa index of genus three via signature divisor, I,
preprint.

\bibitem{ak}
 T. Ashikaga and K. Konno, 
Global and local properties of pencils of algebraic curves,
Algebraic Geometry 2000 Azumino, S.\ Usui et al.\ eds, 1-49, Adv.\ Stud.\ Pure Math.\ ${\bf 36}$, Math.\ Soc.\ Japan, Tokyo, 2002.

\bibitem{AsYo}
T. Ashikaga and K. Yoshikawa, 
A divisor on the moduli space of curves associated to the signature of fibered surfaces (with an Appendix by K. Konno),
Adv.\ St.\ Pure Math.\ \textbf{56} (2009), 1--34.

\bibitem{Cu}
F. Cukierman,
Families of Weierstrass points,
Duke Math.\ \textbf{58} (1989), 317--346.

\bibitem{CumEsGa}
C. Cumino, E. Esteves and L. Gatto,
Limits of special Weierstrass points,
Int.\ Math.\ Res.\ Papers. (2008), 1--65.

\bibitem{Di}
S. Diaz,
Exceptional Weierstrass points and the divisor on moduli that they define,
Mem.\ Amer.\ math.\ Soc.\ \textbf{56} (1985).

\bibitem{En}
H. Endo,
Meyer's signature cocycle and hyperelliptic fibrations, 
Math.\ Ann. \textbf{316} (2000), 237--257.

\bibitem{FabPa}
C. Faber and N. Pagani,
The class of the bielliptic locus in genus $3$,
Int.\ Math.\ Res.\ Notices. \textbf{12} (2015) 3943--3961.

\bibitem{HaMo}
J. Harris and I. Morrison, 
{\em Moduli of curves},
1998, Springer.

\bibitem{Ho}
E. Horikawa,
Algebraic surfaces with a pencil of curves of genus two,
in: {\em Complex Analysis and Algebraic Geometry, A Collection of Papers Dedicated to K. Kodaira, W. L. Baily, Jr. and T. Shioda eds}., Iwanami Shoten, Publishers and Cambridge University Press (1977), 79--90.

\bibitem{Ku}
Y.\ Kuno, 
The mapping class group and the Meyer function for plane curves,
Math.\ Ann.\ \textbf{342} (2008), 923--949.

\bibitem{Ma}
Y.\ Matsumoto,
On $4$-manifolds fibered by tori, I,
Proc.\ Japan Acad.\ Ser. A Math.\ Sci.\ \textbf{58} (1982), 298--301;
II, {\em ibid}. \textbf{59} (1983), 100--103.

\bibitem{Ma2}
Y.\ Matsumoto,
Lefschetz fibrations of genus two-a topological approach-,
in:{\em Proceedings of the 37th Taniguchi Symposium on Topology and Teichm\"{u}ller Spaces}, World Scientific (1996), 123--148.

\bibitem{Re}
M.\ Reid,
Problems on pencils of small genus, Preprint (1990).

\bibitem{Tan2}
S.-L. Tan,
On the invariants of base changes of pencils of curves, II,
Math.\ Z.\ \textbf{222} (1996), 655--676.

\bibitem{Te}
T.\ Terasoma, An appendix to Endo's paper, 
Math.\ Ann.\ \textbf{316} (2000), 355--357.

\bibitem{pi1} 
G. Xiao, 
$\pi_1$ of elliptic and hyperelliptic surfaces, 
Internat.\ J.\ Math.\ ${\bf 2}$ (1991), 599--615.

\end{thebibliography}
\end{document}